\documentclass[11pt]{amsart}
\usepackage{amssymb,amsthm,amsmath,amstext}
\usepackage{mathrsfs}  
\usepackage{bm}        
\usepackage{multirow}
\usepackage[left=1.25in,top=1.35in,right=1.25in,bottom=1.35in]{geometry}

\usepackage[all]{xy}
\usepackage{rotating}

\theoremstyle{plain}
\newtheorem{theorem}{Theorem}
\newtheorem{prop}[theorem]{Proposition}
\newtheorem{lemma}[theorem]{Lemma}
\newtheorem{corollary}[theorem]{Corollary}

\newtheorem{question}[theorem]{Question}


\theoremstyle{definition}
\newtheorem{definition}[theorem]{Definition}

\theoremstyle{remark}
\newtheorem{remark}[theorem]{Remark}
\newtheorem{example}[theorem]{Example}

\numberwithin{equation}{section}

\newcommand{\cat}[1]{\mathsf{#1}}

\newcommand{\isom}{\cong}

\newcommand{\D}{\cat{D}}
\newcommand{\Db}{\D^{\mathrm{b}}}

\newcommand{\Pic}{\mathrm{Pic}}

\newcommand{\Hom}{\mathrm{Hom}}
\newcommand{\Spec}{\mathrm{Spec}}

\newcommand{\sod}[1]{\langle #1 \rangle}

\newcommand{\id}{{\rm id}}

\newcommand{\GK}{\mathrm{GK}}

\newcommand{\ka}{{\mathcal A}}
\newcommand{\kb}{{\mathcal B}}
\newcommand{\kc}{{\mathcal C}}

\newcommand{\kf}{{\mathcal F}}
\newcommand{\kg}{{\mathcal G}}

\newcommand{\ko}{{\mathcal O}}

\newcommand{\ZZ}{\mathbb{Z}}

\newcommand{\CC}{\mathbb{C}}

\newcommand{\FF}{\mathbb{F}}

\newcommand{\PP}{\mathbb{P}}







\newcommand{\linedef}[1]{\textit{#1}}




\begin{document}

\title[Categorical invariant for conic bundles]
{A categorical invariant for geometrically rational surfaces with a conic bundle structure}

\author{Marcello Bernardara}
 \address{Institut de Math\'ematiques de Toulouse \\ %
 Universit\'e Paul Sabatier \\ %
 118 route de Narbonne \\ %
 31062 Toulouse Cedex 9\\ %
 France}
 \email{marcello.bernardara@math.univ-toulouse.fr}

\author{Sara Durighetto}
\address{Institut de Math\'ematiques de Toulouse \\ %
 Universit\'e Paul Sabatier \\ %
 118 route de Narbonne \\ %
 31062 Toulouse Cedex 9\\ %
 France}
\email{drgsra@unife.it}

\begin{abstract} 
We define a categorical birational invariant for minimal geometrically rational surfaces with a conic bundle
structure over a perfect field via components of a natural semiorthogonal decomposition. Together with the similar
known result on del Pezzo surfaces, this provide a categorical birational invariant for geometrically
rational surfaces.
\end{abstract}

\maketitle

\section{Introduction}

In recent years the study of the derived category of an algebraic variety has been
widely developed. It is clear now that semiorthogonal decompositions can provide
a useful tool in order to detect the geometrical structure of a variety. In particular
its interest focus in finding a birational invariant to be used, for example, to study
the rationality of the variety.

In this context, the first author and M. Bolognesi \cite{bolognesi_bernardara:representability}
introduced the concept of
categorical representability and formulated the following question: is a rational
variety always categorically representable in codimension 2? Analogously, is it
possible to characterize obstruction to rationality via natural components of some
semiorthogonal decomposition which cannot be realized in codimension 2?
On the complex field, for example, if we consider a $V_{14}$ Fano threefold $X$, its
derived category admits a semiorthogonal decomposition with only one nontrivial
component $\ka_X$. For a smooth cubic threefold $Y$ we can also find a decomposition
with only one nontrivial component $\ka_Y$, and
and Kuznetsov showed that $\ka_X$ is equivalent to $\ka_Y$
if $Y$ is the unique cubic threefold birational to $X$ \cite{kuznetsov:v14}. This suggests
that one could consider $\ka_X$ as a birational invariant.

In the case of complex conic bundles $\pi : X \to S$ over a minimal rational surface the situation is quite well-known.
A necessary condition to rationality of $X$ is
that the intermediate Jacobian $J(X)$, as principally polarized Abelian variety, is the
direct sum of the intermediate Jacobian of smooth projective curves. It follows for example that
smooth cubic threefolds are not rational \cite{clemens_griffiths}.
From a categorical point of view it is possible to characterize the rationality of
the conic bundle from the semiorthogonal decomposition of the derived category.
By Kuznetsov \cite{kuznetquadrics} we have
\begin{equation}\label{eq:deco-conic-bd}
\Db(X) = \sod{\Phi \Db(D,\kb),\pi^*\Db(S)},
\end{equation}

where $\kb$ is the sheaf of even parts of Clifford algebras associated to the quadratic
form defining the fibration and $\Phi: \Db(S,\kb) \to \Db(S)$ is a fully faithful functor from
the derived category of $\kb$-algebras over $S$. If $S$ is rational the only nontrivial part
for this semiorthogonal decomposition must then be contained in the component
$\Db(S,\kb)$. If $S$ is minimal, the first author and Bolognesi proved that $X$ is rational if and only
if $\Db(S,\kb)$ has a decomposition whose
components are derived categories of smooth curves or exceptional
objects \cite{bolo_berna:conic}.

All those results holds on the complex field $\CC$, but we want to study
the problem over an arbitrary perfect field $k$.
Auel and the first author worked out the case of del Pezzo surfaces \cite{auel-berna-surf}\footnote{the results
in \cite{auel-berna-surf}
are claimed to hold over general fields, but perfection is required, as we will
show in Remark \ref{rmk:perfect}, to ensure that evey birational map can be factored into
Sarkisov links as in \cite{isko-sarkisov-complete}.}.
Given a minimal del Pezzo surface
$S$ of degree $d$ and Picard rank $1$, a natural subcategory $\ka_S \subset \Db(S)$ is defined by the
orthogonal complement to the structure sheaf. In \cite{auel-berna-surf}, a category $\GK_S$ can be defined,
roughly speaking, as the product of all components of $\ka_S$ which are not representable in dimension $0$,
and it is a birational invariant.
Such Griffiths-Kuznetsov component, where it is defined, is then the suitable
birational invariant to detect the rationality of the given variety.
Our aim is to extend this approach to the other class of geometrically rational minimal surfaces,that is,
conic bundles.

The precise definition of such an invariant is given in Definition \ref{def:GKS}.
Roughly speaking, we define the Griffiths-Kuznetsov component
to be the direct sum of subcategories of $\Db(S)$ which are not representable in dimension $0$. However,
unlikely in the case of del Pezzo surfaces, there is no, to the best of the authors' knowledge, argument
to prove that the (natural) decomposition we choose to define $\GK_S$is unique up to mutations. This motivates the
involved case-by-case definition, and a fundamental part of this work is to prove that $\GK_S$ is 
indeed well-defined. Our main result is the following.

\begin{theorem}\label{thm:main}
Let $k$ be a perfect field and $S$ be a geometrically rational surface birational to a conic bundle over
$k$. The Griffiths-Kuznetsov component $\GK_S$ is well defined and is a birational invariant.
\end{theorem}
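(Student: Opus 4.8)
The plan is to reduce the statement to a finite check on minimal models and on the elementary links connecting them. First I would invoke the Iskovskikh--Manin structure theory over the perfect field $k$: after running the relevant minimal model program, $S$ becomes birational to a relatively minimal conic bundle $\pi\colon S_0\to C$ with $C$ a genus-zero curve (a form of $\PP^1$), the parallel del Pezzo case being already settled in \cite{auel-berna-surf}. Since blowing up a closed point only enlarges a semiorthogonal decomposition by exceptional, hence $0$-dimensional, blocks, $\GK_S$ may be computed on any minimal model, and the content of the theorem is (a) independence of the chosen minimal model and (b) independence of the chosen decomposition on a fixed model. By the footnote and \cite{isko-sarkisov-complete}, which is exactly where perfection enters (Remark \ref{rmk:perfect}), every birational map between such surfaces factors into Sarkisov links, so (a) reduces to checking that $\GK$ is preserved under each elementary link of types I--IV, together with the links joining a conic bundle model to a del Pezzo model.

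Next I would make the components explicit. Applying the relative version of Kuznetsov's decomposition \eqref{eq:deco-conic-bd} to $\pi\colon S_0\to C$ gives $\Db(S_0)=\sod{\Db(C,\kb),\pi^*\Db(C)}$, where $\kb$ is the even Clifford algebra of the conic bundle. Because $C$ has genus $0$, both $\Db(C,\kb)$ and $\pi^*\Db(C)$ decompose, after base change to $\ksep$ and Galois descent, into $0$-dimensional blocks of the form $\Db(k',A)$ with $k'/k$ finite separable and $A$ a central simple $k'$-algebra: the algebras recording the base conic, the generic Clifford class, and the residue data at the discriminant points. Since $S_0$ is geometrically rational, no positive-genus curve can occur, so a block is representable in dimension $0$ precisely when its algebra $A$ is split. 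Following Definition \ref{def:GKS}, $\GK_{S_0}$ is then the direct sum of the blocks with $A$ nonsplit, i.e. concretely a multiset of Brauer classes over residue fields, and the whole problem becomes the stability of this multiset.

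The main obstacle is well-definedness on a fixed model, precisely because, as stressed before Definition \ref{def:GKS}, no mutation-uniqueness theorem is available. Each block in play is nonetheless the module category of a single division algebra over a residue field, and a nonsplit such block can never be an admissible subcategory of the derived category of a $0$-dimensional $k$-scheme, by the Morita obstruction (a noncommutative division class cannot be matched by a product of field categories). This is what makes the split/nonsplit partition intrinsic and gives $\GK_{S_0}$ its meaning; what must still be ruled out is that two legitimate natural decompositions produce genuinely different multisets of division algebras. I expect the real labour here to be the case-by-case verification the definition anticipates, organized by $K_{S_0}^2$ and by the degree and the $\Gal(\ksep/k)$-splitting type of the discriminant: in each case I would write the natural decomposition explicitly, read off the residue algebras at the discriminant points and the generic Clifford class, and check directly that the nonsplit atoms, and hence $\GK_{S_0}$, are forced, with no accidental merging of discriminant points over $k$.

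Finally, for invariance I would track the blocks through each link. A link of type II (blow up a closed point on a smooth fibre and contract the strict transform of that fibre) alters $\Db(S_0)$ only by mutations that permute and twist the split blocks while fixing the residue algebras at the discriminant and the generic Clifford class, hence fixes $\GK$; types I, III and IV are handled similarly by matching these same classes before and after. The remaining, and most delicate, point is compatibility: when $S_0$ also admits a del Pezzo minimal model I must verify that the Clifford-theoretic $\GK_{S_0}$ coincides with the orthogonal-complement invariant of \cite{auel-berna-surf}. I would carry this out for the finitely many relevant degrees by identifying the nonsplit blocks on both sides with the same Brauer classes, using the explicit links realizing the birational equivalence of the two models. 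This reconciliation, where two genuinely different constructions must be shown to compute the same multiset of division algebras, is the technical heart of the invariance statement.
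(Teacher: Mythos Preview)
Your proposal rests on a misreading of Definition~\ref{def:GKS}: in the generic cases (iv) and (vi), $\GK_S$ is defined to contain the \emph{entire} category $\Db(C,\kb)$, not a further decomposition of it into $0$-dimensional blocks $\Db(k',A)$. The assertion that $\Db(C,\kb)$ always admits such a decomposition over $k$ by Galois descent of \eqref{eq:on-the-algclos} is precisely what the paper cannot establish and explicitly sidesteps; the sentence before Definition~\ref{def:GKS} says there is no known argument that any natural decomposition is unique up to mutations, and the same obstruction blocks the descent you invoke, since the Galois action permutes both the singular fibres and the two lines within each one, so the exceptional collection over $\overline{k}$ need not be Galois-stable as a set of subcategories. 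Your framework of tracking ``a multiset of Brauer classes over residue fields'' therefore has no foundation in this setting, and with it collapses your proposed treatment of well-definedness and of the links.

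The paper's argument is organised along the same Sarkisov skeleton you describe, but the content at each link is different. For type~II (Theorem~\ref{thm:linkII}) one works on the common blow-up $Z$ and shows directly that $p^*\ka_{\overline{S}}\otimes\ko(E')=q^*\ka_{\overline{S'}}$ via explicit divisor identities (in particular $\Sigma=\Sigma'-E'$), then descends the autoequivalence $\otimes\,\ko(E')$ to $k$; this gives $\Db(C,\kb)\simeq\Db(C,\kb')$ as undivided categories, with no reference to residue algebras. For type~IV the proof is a case-by-case sequence of mutations in degrees $8,4,2,1$ showing $\ka_1\simeq\ka_2$; the degree~$1$ case in particular needs a Riemann--Roch computation to identify two mutated exceptional rank-$2$ bundles. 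Your description (``permute and twist the split blocks while fixing the residue algebras'') does not approximate either argument, and without the block decomposition you assume there is no substitute strategy in your plan.
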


Recall the classification of minimal geometrically rational surfaces over an arbitrary field (see, e.g., \cite{hassett-ratsurf}):
minimal conic bundles are one of the two possible classes of such surfaces, namely 
the ones with Picard rank two, the other being del Pezzo surfaces
with Picard rank one. Combining Theorem \ref{thm:main} with the results from \cite{auel-berna-surf},
we obtain the following result.

\begin{theorem}
Let $S$ be a geometrically rational surface over a perfect field $k$.
Then the Griffiths-Kuznetsov component $\GK_S$ is well-defined and it is a birational
invariant.
\end{theorem}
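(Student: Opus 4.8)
The plan is to deduce the final theorem directly from the combination of Theorem~\ref{thm:main} and the del Pezzo case established in \cite{auel-berna-surf}, using the classification of minimal geometrically rational surfaces.

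First I would invoke the classification result recalled just above (see, e.g., \cite{hassett-ratsurf}): any geometrically rational surface $S$ over a perfect field $k$ is birational to a \emph{minimal} geometrically rational surface, and every such minimal model falls into exactly one of two classes, namely minimal conic bundles (Picard rank two) or del Pezzo surfaces of Picard rank one. Thus, after running a suitable minimal model program, $S$ is birational either to a minimal conic bundle or to a minimal del Pezzo surface of Picard rank one. In the first case, $S$ is by definition birational to a conic bundle, so Theorem~\ref{thm:main} applies verbatim and asserts that $\GK_S$ is well-defined and a birational invariant. In the second case, $S$ is birational to a minimal del Pezzo surface of Picard rank one, and the construction of $\GK_S$ together with its well-definedness and birational invariance is precisely the content of \cite{auel-berna-surf}.

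Next I would check that the two definitions of $\GK_S$ agree on any overlap and that the invariant is unambiguous. Since $\GK_S$ is defined via a chosen natural semiorthogonal decomposition and its birational invariance is established, the value of $\GK_S$ depends only on the birational class of $S$, not on the chosen minimal model. In particular, if a surface happened to be birational both to a conic bundle and to a del Pezzo surface, the invariance statements in the two regimes force the two resulting categories to coincide, so there is no inconsistency; the perfectness of $k$ is what guarantees, via the Sarkisov program of \cite{isko-sarkisov-complete} as noted in the footnote, that any birational map between such surfaces factors through standard links, which is the mechanism underlying both invariance proofs.

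The main obstacle, and indeed the only substantive content, is concentrated entirely in Theorem~\ref{thm:main}: the del Pezzo case is already handled in \cite{auel-berna-surf}, so the final theorem is essentially a formal corollary of Theorem~\ref{thm:main} plus the classification. The one point requiring care is that the classification and the reduction to a minimal model must be phrased over the perfect field $k$ rather than over $\kalg$, and that the birational invariance must be understood as invariance under $k$-birational maps; once these conventions are fixed, the proof is the bookkeeping described above, and no new geometric or categorical input beyond Theorem~\ref{thm:main} is needed.
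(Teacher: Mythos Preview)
Your proposal is correct and matches the paper's approach exactly: the paper presents this theorem as an immediate consequence of Theorem~\ref{thm:main} combined with the del Pezzo results of \cite{auel-berna-surf}, via the classification of minimal geometrically rational surfaces. The only nuance worth flagging is that the compatibility on the overlap (del Pezzo surfaces birational to conic bundles via links of type I/III) is not inferred from two independent invariance statements as your second paragraph suggests, but is instead built directly into Definition~\ref{def:GKS} and into the proof of Theorem~\ref{thm:main}, using the explicit equivalences from \cite{auel-berna-surf} recalled in Section~4.
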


\subsection*{Notations}
Functors of geometric origin between derived categories will be denoted
underived (i.e. $f^*$ instead of $Lf^*$ for the pull-back via a morphism). Given a $k$-algebra
$A$, the notation $\Db(k,A)$ stands for the $k$-linear bounded derived category of coherent
$A$-modules.

\subsection*{Acknowledgments}
The authors are grateful to St\'ephane Lamy for providing and discussing the example in remark
\ref{rmk:perfect}, and to J\'er\'emy Blanc and Michele Bolognesi for fruitful conversations.

\section{Basics on geometrically rational surfaces}

In this section we will introduce some useful and known results.
Let $k$ be a perfect field and $\overline{k}$ an algebraic closure. Let us consider
$S$, a smooth projective geometrically integral surface over $k$. We say that $S$ is
geometrically rational if $\overline{S}:= S \times_k \overline{k}$ is $\overline{k}$-rational.
A field extension $l$ of $k$ is a splitting field for $S$ if $S \times_k l$ is birational to $\PP^2_l$
through a sequence of monoidal transformations centered at closed $l$-points.

A smooth projective surface $S$ is minimal over $k$ if every birational morphism
$\phi : S \to Y$, defined over $k$, to a smooth variety $Y$ is an isomorphism. If $k$ is algebraically
closed, the only minimal rational surfaces are the projective plane and projective
bundles over $\PP^1$. Over a general field, we have the following classification (see, e.g., \cite{hassett-ratsurf}).

\begin{prop}
Let $S$ be a minimal geometrically rational surface over $k$. Then $S$ is one of the following:
\begin{enumerate}
\item $S=\PP^2_k$ is the projective plane, so $\Pic(S)
= \ZZ$, generated by the hyperplane $\ko(1)$;

\item $S \subset \PP^3_k$ is a smooth quadric and $\Pic(S) = \ZZ$, generated by the hyperplane section $\ko(1)$;

\item $S$ is a del Pezzo surface with $\Pic(S) = \ZZ$, generated by the canonical class $\omega_S$;

\item $S$ is a conic bundle $f : S \to C$ over a geometrically rational curve, with $\Pic(S) \simeq \ZZ \oplus \ZZ$.
\end{enumerate}
 
\end{prop}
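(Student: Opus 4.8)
The plan is to run the classical Manin--Iskovskikh argument, reading the structure off the Galois action on the geometric Picard group together with the minimal model program, which is available over any perfect field. First I would pass to the algebraic closure: since $S$ is geometrically rational, $\overline{S} = S \times_k \overline{k}$ is a smooth rational surface, so $\Pic(\overline{S})$ is a finitely generated free abelian group on which $G = \Gal(\overline{k}/k)$ acts preserving both the intersection form and the canonical class $K_{\overline{S}}$. The Hochschild--Serre spectral sequence yields an injection $\Pic(S) \hookrightarrow \Pic(\overline{S})^{G}$ with cokernel landing in $\Br(k)$; since the latter is torsion this already gives $\rho(S) := \rank \Pic(S) = \rank \Pic(\overline{S})^{G}$, and the map is an isomorphism whenever $S(k) \neq \emptyset$.

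The core step is to translate minimality into a statement about extremal rays. Running the $G$-equivariant $K_S$-minimal model program, every $K_S$-negative extremal contraction becomes, after base change to $\overline{k}$, the contraction of a $G$-stable configuration of $(-1)$-curves on $\overline{S}$: a birational extremal contraction collapses a $G$-orbit of pairwise disjoint $(-1)$-curves to a smooth point, and minimality of $S$ is exactly the assertion that no such orbit exists. The only remaining contractions are then a fibration $f : S \to C$ onto a smooth curve with relative Picard rank one, or the contraction to $\Spec k$. This produces the fundamental dichotomy: either $\rho(S) = 1$ with $-K_S$ ample, or $S$ carries a conic bundle structure $f : S \to C$ with $\rho(S) = 2$.

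In the rank-one case $\Pic(S) = \ZZ \cdot H$ for an ample generator $H$ with $-K_S = mH$, and I would separate the three subcases by the geometry of $\overline{S}$ together with the descent of the generator. If $\overline{S} \isom \PP^2$ and the class of a line descends (so $m = 3$ and $H^2 = 1$), then $S = \PP^2_k$ (item (i)); if $\overline{S} \isom \PP^1 \times \PP^1$ with $G$ interchanging the two rulings and their sum descends (so $m = 2$ and $H^2 = 2$), then $H = \ko(1)$ embeds $S$ as a smooth quadric in $\PP^3_k$ (item (ii)); in every other rank-one case the Brauer obstruction prevents any proper divisor of $-K_S$ from descending, forcing the generator to be $\omega_S$ itself (item (iii)). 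For the fibration case, $\Pic(S) = \ZZ \langle F \rangle \oplus \ZZ \langle \Sigma \rangle$ with $F$ a fiber class and $\Sigma$ a relative class, and since $C$ is dominated by the rational surface $S$ it is geometrically rational, giving item (iv).

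The main obstacle is the second paragraph: making the equivariant minimal model program rigorous over a non-closed perfect field and showing that minimality rules out every birational extremal contraction. This is precisely where perfection of $k$ enters, since it is needed to descend the $(-1)$-curve configurations and control their $G$-orbit structure; it is the technical heart of the classification recorded in \cite{hassett-ratsurf}. Once the dichotomy is established, the separation of the three rank-one types is comparatively routine lattice-and-Brauer bookkeeping.
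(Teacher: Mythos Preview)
The paper does not give its own proof of this proposition; it is stated as a known classification result and attributed to \cite{hassett-ratsurf}. Your sketch reconstructs precisely the standard Manin--Iskovskikh/Mori argument recorded there, so there is nothing in the paper to compare it against.

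One small correction to your rank-one case analysis: for del Pezzo surfaces of degree $d \le 7$ the canonical class $K_{\overline S}$ is already primitive in the full lattice $\Pic(\overline{S})$, so once $\rank \Pic(\overline{S})^G = 1$ the invariant sublattice is automatically $\ZZ \cdot K_S$, and since the canonical class always descends one gets $\Pic(S) = \ZZ \cdot \omega_S$ with no Brauer obstruction entering at all. The Brauer obstruction genuinely intervenes only in degrees $8$ and $9$, where it separates the split forms (items (i) and (ii)) from the non-split Severi--Brauer and involution surfaces, which then fall into (iii). With that adjustment the outline is sound.
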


\subsection{Elementary links}
We recall some elements of the Sarkisov
program which describes the factorization of a birational map between minimal rational surfaces
in elementary links \cite{isko-sarkisov-complete}.
Let $\pi: S \to Y$ be a minimal geometrically rational surface with an extremal
contraction. Then either $Y$ is a point and $S$ is a minimal surface with Picard
rank 1 or $Y$ is a curve and $S$ is a conic bundle with Picard rank 2. If 
$\pi : S \to Y$ and $\pi' : S' \to Y'$ are two extremal contractions, an elementary
link is a birational map $\phi: S \dashrightarrow S'$ of one of the following types:

\begin{enumerate}
 \item[Type I)] There is a commutative diagram
 $$\xymatrix{S \ar[d] & S' \ar[l]_\sigma \ar[d]\\
 Y & Y' \ar[l]_\psi}$$
 where $\phi=\sigma^{-1}$, $\sigma: S' \to S$ is a Mori divisorial elementary contraction and
 $\psi: Y' \to Y$ is a morphism. In this case, $Y = \Spec(k)$,
 $\rho(S)=1$, $S$ is a minimal del Pezzo, and $S' \to Y'$ is a
 conic bundle over a geometrically rational curve.

\medskip

\item[Type II)]  There is a commutative diagram
 $$\xymatrix{S \ar[d] & X \ar[l]_\sigma \ar[r]^\tau& S' \ar[d] \\
 Y & & Y' \ar[ll]_\isom}$$
 where $\phi=\tau\circ \sigma^{-1}$, $\sigma:X \to S$ and $\tau: X \to S'$ are Mori divisorial
 elementary contractions.  In this case, $S$ and $S'$ have the same
 Picard number, and are hence either both del Pezzo surfaces (and $Y$ is a point) or
 both conic bundles (and $Y$ is a geometrically rational curve).

\medskip

\item[Type III)]  There is a commutative diagram
 $$\xymatrix{S \ar[d]\ar[r]^\sigma  & S' \ar[d]\\
 Y \ar[r]^\psi& Y' }$$
where $\phi=\sigma$, $\sigma: S \to S'$ is a Mori divisorial elementary contraction and $\psi: Y \to Y'$ is 
a morphism. Links of type III are nothing but inverse of links of type I.

\medskip

\item[Type IV)] There is a commutative diagram
 $$\xymatrix{S \ar[d]\ar@{-->}[rr]^\phi & & S' \ar[d]\\
 Y \ar[rd]^\psi& & Y' \ar[ld]_{\psi'} \\
 & \Spec(k) & }$$
 where $S \simeq S'$ are isomorphic, $Y$ and $Y'$ are geometrically rational curves and $\psi$ and $\psi'$
 are the structural morphisms. Then both $S$ and $S'$ are conic bundles and the link amounts to a change of conic
 bundle structure on $S$.
\end{enumerate}

Any birational map $\phi: S \dashrightarrow S'$ between minimal geometrically rational surfaces can be
factored through elementary links, and Iskovskikh gives the complete list of all possible such links \cite{isko-sarkisov-complete}.
We note that the Picard rank is invariant under links of type II and IV, while it changes under links of type I and III.
Moreover, if we suppose that $S$ is not rational the list of links of type I (and hence of their inverses of type III)
is very limited: either $S$ is of degree 8 with a point
of degree 2, and $S'$ is of degree 6 and the curve $C$ can be rational (according to $S$ being a quadric or not),
or $S$ is of degree 4, has a rational point and $S'$ is of degree $3$ and $C$ is a rational curve.

\begin{remark}\label{rmk:perfect}
If $k$ is not perfect, than a birational map may not be decomposable in a finite sequence of elementary links
centered at closed points. An example of such map was given in \cite[Rmk. 1.3]{lamy-zimmermann}: if $k=(\ZZ/2\ZZ)[t]$,
the birational map $\phi$ of $\PP^2_k$ given by
$$[x_0:x_1:x_2] \dashrightarrow [x_0x_2:x_1x_2:x_0^2+tx_1^2]$$
has $[\sqrt{t}:1:0]$ as a base point, and such a point is never defined over a separable field extension of $k$.
\end{remark}

\section{Basics on derived categories}

\subsection{Categorical representability}
Using semiorthogonal decompositions, one can define a notion of
\linedef{categorical representability} for triangulated categories. In
the case of smooth projective varieties, this is inspired by the
classical notions of representability of cycles, see
\cite{bolognesi_bernardara:representability}. We refrain here to recall standard
notions of semiorthogonal decompositions, exceptional objects, and mutations, the interested reader can
refer to \cite{auel-berna-frg}. Let us just recall a nonstandard definition of exceptional object.

\begin{definition}
\label{def-except}
Let $A$ be a division (not necessarily central) simple $k$-algebra (i.e., the
center of $A$ could be a field extension of $k$), and $\ka$ a $k$-linear triangulated category.
An object $V$ of
$\ka$ is called \linedef{$A$-exceptional} if
$$
\Hom(V,V) = A \quad \text{and} \quad
\Hom(V,V[r])=0 \quad \text{for} \quad r \neq 0.
$$
An exceptional object in the classical sense of the term
\cite[Def.~3.2]{gorodentsev-moving} is a $k$-exceptional
object.  By \linedef{exceptional} object, we mean $A$-exceptional for
some division $k$-algebra $A$.
\end{definition}

\begin{example}\label{ex:quillen}
Let $A$ be a central simple algebra over $k$ and $X:=SB(A)$ the Severi-Brauer variety associated to
it, and let $n=\dim X$. The Quillen vector bundle $V$ is a rank $n+1$ indecomposable vector bundle whose base change to a splitting
field is $\ko(1)^{\oplus n+1}$, and is in particular an $A$-exceptional object \cite{quillen:higher_K-theory}. 
\end{example}

\begin{definition}
\label{def-rep-for-cat}
A triangulated category $\ka$ is \linedef{representable
in dimension $m$} if it admits a semiorthogonal decomposition
$$
\ka = \langle \ka_1, \ldots, \ka_r \rangle,
$$
and for each $i=1,\ldots,r$ there exists a smooth projective
$k$-variety $Y_i$ with $\dim Y_i \leq m$, such that $\ka_i$ is
equivalent to an admissible subcategory of $\Db(Y_i)$.
\end{definition}

The motivation
for the above definition is the possibility to formulate the following
question:

\begin{question}
Let $X$ be a smooth projective $k$-variety of dimension $n$. Does $X$ rational imply $\Db(X)$ categorically
representable in dimension $n-2$?
\end{question}

In this work, we consider the above question for surfaces, and we are hence interested in characterizing categories which are representable
in dimension 0. This was done in \cite{auel-berna-surf}.

\begin{lemma}\label{lem:dim0dim1}
A triangulated category $\ka$ is representable in dimension $0$ if and only if there exists a semiorthogonal
decomposition $$
\ka = \langle \ka_1, \ldots, \ka_r \rangle,
$$
such that for each $i$, there is a $k$-linear equivalence $\ka_i
\simeq \Db(K_i/k)$ for a separable field extension $K_i/k$
\end{lemma}

\subsection{Conic bundles}
We recall a natural semiorthogonal decomposition of
the derived category of a surface with conic bundle structure, following the work of Kuznetsov \cite{kuznetquadrics} and
its generalization to general fields \cite{auel-berna-bolo}.
Let $S$ be a surface over the field $k$ with a structure of conic bundle $\pi : S \to
C$ over a geometrically rational curve. Such conic bundle is
associated to a quadratic form $q : E \to L$ on a locally free $\ko_C$-module $E$ of rank 3.
Denote by $\ko_{S/C} (1)$ the restriction to $S$ of the line bundle $\ko_{\PP E/C}(1)$, and let $\kb$
be the even Clifford algebra associated to the form $q$, which is a locally free $\ko_C$-algebra whose isomorphism
class is invariant for $\pi:S \to C$.

Under these conditions, we have that $\pi^* : \Db (C) \to \Db (S)$
is fully faithful, and there exist a fully faithful functor $\Phi: \Db (C, \kb) \to \Db (S)$ such
that
$$\Db (S) = \sod{\pi^* \Db(C),\Phi \Db (C, \kb)}.$$

Moreover, since $C$ is a geometrically rational curve, there is a simple $k$-algebra $A$ (trivial if and
only if $C \simeq \PP^1_k$) such that $C=SB(A)$. In particular, there is an $A$-exceptional object $V$,
which is either $\ko(1)$ if $C \simeq \PP^1_k$ or the Quillen bundle $V$ as in example \ref{ex:quillen} if $C$ is not
rational, such that
$$\Db(C)= \sod{\ko_C,V} = \sod{V^*,\ko_C}.$$

It follows that we can refine the semiorthogonal decomposition of $S$ (abusing of notation, setting $V:=\pi^*V$)
as follows:
\begin{equation}\label{eq:the-main-deco}
\Db(S)=\sod{\ko_S,V,\Phi \Db(S,\kb)}.
\end{equation}

Now, let $\overline{\pi}:\overline{S} \to \PP^1_{\overline{k}}$ be the base change of the conic bundle to the algebraic
closure. Such a conic bundle is not necessarily a Hirzebruch surface and can indeed be not minimal, and have
a finite number, say $r$, of singular fibers which are given by two lines meeting in a point. We can pick
one line in each fiber and denote such set of lines by $F_1,\ldots, F_r$. The Picard rank of $\overline{S}$ is then $2+r$, and there is a
semiorthogonal decomposition obtained by considering the $\overline{k}$-minimal model $\overline{S} \to S_0$,
which is a Hirzebruch surface:
\begin{equation}\label{eq:on-the-algclos}
\Db(\overline{S})=\sod{\ko_{\overline{S}},\ko_{\overline{S}}(F),\ko_{\overline{S}}(\Sigma),
\ko_{\overline{S}}(\Sigma+F),\ko_{F_1},\ldots,\ko_{F_r}},
\end{equation}
where $F$ is the general fiber of $\overline{\pi}$, $\Sigma$ is a section of $\overline{\pi}$.

We finally notice that the base change of the semiorthogonal decomposition \eqref{eq:the-main-deco} is 
exactly the semiorthogonal decomposition \eqref{eq:on-the-algclos}: indeed, either $C$ is rational and
we already have $V=\ko_S(F)$, or $C$ is not rational, $V$ has rank $2$ and we have $\overline{V}=
\ko_{\overline{S}}(F)^{\oplus 2}$. The latter generates the same category as $\ko_{\overline{S}}(F)$
since we are considering thick subcategories.

It follows that the base change of $\Phi \Db(C,\kb)$ to $\overline{S}$ is the subcategory

\begin{equation}
\sod{\ko_{\overline{S}}, \ko_{\overline{S}}(F)}^\perp=\sod{\ko_{\overline{S}}(\Sigma),
\ko_{\overline{S}}(\Sigma+F),\ko_{F_1},\ldots,\ko_{F_r}}.
\end{equation}

\section{Links of type I/III and the definition of the Griffiths-Kuznetsov component}
We are going to construct a birational invariant for geometrically rational surfaces with a conic
bundle $\pi: S \to C$ as the collection of subcategories in the semiorthogonal decomposition 
\eqref{eq:the-main-deco} which are not representable in dimension 0. Such an invariant will match the
one constructed in \cite{auel-berna-surf} in the case where $S$ is birational to a minimal del Pezzo
surface.
Hence, we first have to deal with links of type I/III in the non-rational cases to give a proper definition.
Indeed, the subcategory $\Phi\Db(C,\kb)$ can admit semiorthogonal decompositions and even exceptional
object if $S$ is birational to a quadric or to a del Pezzo surface of degree 4.
These cases were already treated in \cite{auel-berna-surf}, and we quickly recall them.

\medskip

Let $S'$ be a minimal non-rational del Pezzo surface of degree 8 with a point of degree 2. Then, $S'$ is
an involution surface in a Severi-Brauer threefold $SB(B)$, and there is an associated even Clifford algebra
$\kc$, which is a simple algebra
whose center is a degree two field extension of $k$, and a semiorthogonal decomposition
$$\Db(S')= \sod{\Db(k),\Db(k,B),\Db(k,\kc)},$$

where the first category is generated by $\ko_{S'}$ and the second one either by $\ko_{S'}(1)$
(in which case $B=0$ and $S'$ is a quadric) or by the restriction of the Quillen bundle of $SB(B)$ to
$S'$ (in the case where $S'$ is not a quadric). It follows that the Griffiths-Kuznetsov component
for $S'$ should be $\GK_{S'}:=\Db(k,\kc)$ if $S'$ is a quadric and $\GK_{S'}:=\Db(k,\kc) \oplus \Db(k,B)$
if $S'$ is not a quadric. In this case, such a category is not shown to be a birational invariant in \cite{auel-berna-surf},
and this is due to the existence of a link of type I, from which follows that the birational class of
$S'$ contains minimal conic bundles of degree 6. Indeed, the blow-up of a degree 2 point $S \to S'$ is a
conic bundle $\pi: S \to C$, with $C$ either rational if $S'$ is a quadric or non-rational if $S'$ is not a quadric.

In \cite[\S B]{auel-berna-surf}, it is proved that the component we want to construct is indeed related to the standard semiorthogonal
decomposition of the conic bundle as follows: writing $C=SB(A)$ we have that $A$ and $B$ are Morita-equivalent
(note that $B$ has order dividing 2 since it has an involution defining $S'$), and that there is a degree 2 extension
$l/k$ and a semiorthogonal decomposition:
$$\Db(C,\kb)=\sod{\Db(l),\Db(k,\kc)}.$$
It follows that the components which are (potentially) not representable in dimension 0 in the standard decomposition
\eqref{eq:the-main-deco} are exactly the ones we considered above for $S'$.

\medskip

Let $S'$ be a minimal del Pezzo surface of degree 4 with a rational point. Note that $S'$ is not rational.
In particular, there is semiorthogonal decomposition
$$\Db(S')=\sod{\ko_{S'},\ka_{S'}},$$
and $\GK_{S'}:=\ka_{S'}$ is expected to be the good candidate for the birational invariant we are looking for.
This case neither was treated in \cite{auel-berna-surf}, since, again, the existence of a link of type I
implies that the birational class of $S'$ contains minimal conic bundles of degree 3. Indeed, the blow-up 
of a rational point $S \to S'$ is a conic bundle $\pi: S \to \PP^1$.

In \cite[A.2]{auel-berna-surf} (see also \cite{auel-berna-bolo}), it is proved that the component we want
to construct is indeed related to the standard semiorthogonal
decomposition of the conic bundle since there is an equivalence $\Db(C,\kb) \simeq \ka_{S'}$.
It follows that the component which is (potentially) not representable in dimension 0 in the standard decomposition
\eqref{eq:the-main-deco} is exactly the one we considered above for $S'$.

\medskip

Now that we have analyzed all the possible links of type I/III between non-rational surfaces, we can give the
definition of the Griffiths-Kuznetsov component of a conic bundle.

\begin{definition}\label{def:GKS}
Let $S$ be a surface with a structure of conic bundle $\pi: S \to C$ over
a geometrically rational smooth curve $C$. If $S$ is minimal, the Griffiths-Kuznetsov
component $\GK_S$ of $S$ is defined as follows:
\begin{enumerate}
 \item if $S$ is rational, $\GK_S= 0$;
 \item if $S = C_1 \times C_2$ where $C_i$ is a geometrically rational curve with associated
Azumaya algebra $A_i$, then $\GK_S$ is the sum of those between $\Db(k,A_1)$, $\Db(k,A_2)$ and $\Db(k,A_1 \otimes A_2)$
which are not equivalent to $\Db(k)$ (equivalently, the algebra is not Brauer-trivial);
\item if $C \simeq \PP^1$ and $S$ is birational to a non-rational quadric with associated even Clifford
algebra $\kc$, then $\GK_S = \Db (k, \kc)$;
\item if $C \simeq \PP^1$, and $S$ is neither rational nor birational to a quadric, $\GK_S = \Db(\PP^1,\kb)$;
\item if $C$ is not rational with associated Azumaya algebra $A$, and $S$ is birational to a quadric
with associated even Clifford algebra $\kc$, then $\GK_S = \Db (k, A)\oplus \Db(k,\kc)$;
\item if $C$ is not rational with associated Azumaya algebra $A$, and $S$ is not birational to a quadric,
then $\GK_S = \Db (k, A)\oplus \Db(C,\kb)$;
\end{enumerate}
If $S$ is not minimal, the Griffiths-Kuznetsov component is $\GK_S = \GK_{S_0}$ for a
minimal model $S \to S_0$. 
\end{definition}

Note that the term component is slightly abused here, since there are cases where $\GK_S$
is not a component of $\Db(S)$ but rather the direct sum of some components. Since we can operate
mutations on semiorthogonal decompositions (and we will indeed do to proof the main theorem),
we cannot in general give any canonical gluing of components contributing to $\GK_S$.

The rest of the paper is dedicated to the proof of Theorem \ref{thm:main}.
Note that we can restrict to minimal models.
If $\pi : S \to C$ and $\pi' : S' \to C'$ be are minimal conic bundle structures
and $\phi: S \dashrightarrow S'$ is a birational morphism, then $\phi$ can be decomposed in
a finite number of links. The invariance of $\GK_S$ under links of type I and III has
been studied above and follows by results from \cite{auel-berna-surf}.
To complete the proof, we will prove the invariance under links of type II
(Theorem \ref{thm:linkII}) and type IV (Corollary \ref{cor:typeIV}).

\section{Links of type II}
Links of type II between conic bundles are the most common birational transformation, that is given by
an elementary transformation along a closed fiber of the conic bundle structure. Let
$\pi:S\to C$ be a conic bundle. To define a link of type II, pick a closed point $x \in S$ of degree $d$, and denote by
$S_x$ the fiber of $\pi$ containing it. Then perform the blow-up of $x$
followed by the subsequent contraction of the fiber $S_x$. This gives a conic bundle $\pi':S' \to C$
and a commutative diagram:
$$\xymatrix{
& Z \ar[dl]_p \ar[dr]^q & \\
S \ar[d]_\pi \ar@{-->}[rr]^\phi & & S' \ar[d]^{\pi'} \\
C \ar[rr]^{\id} & & C
}$$

\begin{theorem}\label{thm:linkII}
In the above setting, we have $\GK_S \simeq \GK_{S'}$. 
\end{theorem}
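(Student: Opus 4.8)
The plan is to reduce the statement, across all cases of Definition \ref{def:GKS}, to the invariance of the even Clifford algebra, and to prove the latter by combining the blow-up formula with a direct comparison of orders. Since $\phi$ is birational, $S$ is rational if and only if $S'$ is, and $S$ is birational to a quadric if and only if $S'$ is; moreover $\pi$ and $\pi'$ have the \emph{same} base $C$, hence the same associated algebra $A$ with $C=\SB(A)$, and the same $A$-exceptional object $V_C$. Thus $S$ and $S'$ fall under the same item of Definition \ref{def:GKS}. Item (i) is vacuous. In the cases where $S$ is birational to a quadric (items (ii), (iii), (v); recall that a product $C_1\times C_2$ is a quadric surface of trivial discriminant), the even Clifford algebra $\kc$ of the quadric---and, in the product case, the algebras $A_1,A_2,A_1\otimes A_2$---are determined by the common birational class through the type I/III analysis recalled above, following \cite{auel-berna-surf}; hence $\GK_S\simeq\GK_{S'}$. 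It remains to treat items (iv) and (vi), where the formulas for $\GK$ involve only $A$ (already fixed) and $\Db(C,\kb)$, so that everything reduces to the single equivalence $\Db(C,\kb)\simeq\Db(C,\kb')$.

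To set up this comparison I would pass to $Z$. Writing $V:=(\pi\circ p)^{*}V_C=(\pi'\circ q)^{*}V_C$, the identities $p^{*}\ko_S=\ko_Z=q^{*}\ko_{S'}$ and $p^{*}V=q^{*}V'$ show that the two pulled-back decompositions \eqref{eq:the-main-deco} share the initial segment $\sod{\ko_Z,V}$. Applying Orlov's blow-up formula to $p$ and to $q$ and taking right orthogonals, one obtains a single category
$$\kt:=\sod{\ko_Z,V}^{\perp}=\sod{p^{*}\Phi\Db(C,\kb),\,\Db(k(x))}=\sod{q^{*}\Phi'\Db(C,\kb'),\,\Db(k(x'))},$$
where $k(x)$ and $k(x')$ are separable extensions of $k$---here perfection of $k$ enters---so that both extra components are representable in dimension $0$. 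The problem is thereby localized: $\kt$ carries two semiorthogonal decompositions whose large components are $\Db(C,\kb)$ and $\Db(C,\kb')$ and whose remaining components are representable in dimension $0$.

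The main obstacle is to match these large components, since, as stressed in the introduction, such a decomposition is not known to be unique up to mutation and the residue fields $k(x)$, $k(x')$ need not coincide, so one cannot simply cancel the dimension-$0$ pieces. I would therefore compare $\kb$ and $\kb'$ geometrically: the link is an isomorphism over the generic point of $C$, so $\kb$ and $\kb'$ are orders in one and the same quaternion algebra over $k(C)$ and agree away from the fibre over $c=\pi(x)$. The whole content is then a local computation at $c$, showing that the elementary transformation alters the even Clifford order only by Morita-trivial data---over $\kalg$ this is visible as a change, among the exceptional summands $\ko_{F_i}$ and the section classes of \eqref{eq:on-the-algclos}, that leaves the essential, non--dimension-$0$ part untouched---whence $\Db(C,\kb)\simeq\Db(C,\kb')$. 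This local control of the even Clifford algebra under the elementary transformation is the step I expect to demand the most care; granting it, the case analysis above yields $\GK_S\simeq\GK_{S'}$.
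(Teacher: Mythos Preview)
Your reduction is sound and agrees with the paper's first move: the base $C$, hence the algebra $A$ and the object $V$, are shared by $\pi$ and $\pi'$, so everything comes down to the equivalence $\Db(C,\kb)\simeq\Db(C,\kb')$. The gap is precisely where you place it. You observe that $\kb$ and $\kb'$ are orders in the same quaternion $k(C)$-algebra agreeing away from $c=\pi(x)$, and then defer the actual Morita comparison at $c$ with ``granting it''. That deferred step is the entire content of the theorem: one must control how the elementary transformation changes the local even Clifford order at $c$, and this is not a formality (the fibre over $c$ may be smooth or degenerate, the residue fields $k(x)$ and $k(x')$ differ, and nothing in your setup on $\kt=\sod{\ko_Z,V}^{\perp}$ lets you cancel the dimension-$0$ pieces). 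As written, the proposal stops short of a proof.

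The paper avoids the order-theoretic computation altogether by exhibiting an explicit equivalence on $Z$. Rather than comparing two decompositions of a common $\kt$, it shows that the autoequivalence $-\otimes\ko_Z(E')$ of $\Db(Z)$ carries $p^{*}\ka_S$ onto $q^{*}\ka_{S'}$. This is verified over $\overline{k}$ against the explicit generators of \eqref{eq:on-the-algclos}: from the divisor relations for the link one extracts $\Sigma=\Sigma'-E'$, whence $\sod{\ko(\Sigma),\ko(\Sigma+F)}\otimes\ko(E')=\sod{\ko(\Sigma'),\ko(\Sigma'+F')}$; and the torsion generators $\ko_{F_i}$ are unchanged because $E'$ is supported away from the singular fibres. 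Since $E'$ is defined over $k$, the twist descends to $\Db(Z)$ and the equality of admissible subcategories holds over $k$, giving $\ka_S\simeq\ka_{S'}$ directly. Your blow-up framework on $Z$ is a reasonable staging ground, but the missing idea is this explicit line-bundle twist matching the two embedded Clifford components, in place of the local algebra computation you postpone.
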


\begin{proof}
Let $E$ be the exceptional divisor of $p$ and $E'$ the exceptional of $q$. We denote
by $f$ and $f'$ the fibers of $\pi$ and $\pi'$ in $Z$ respectively. Recall
\cite{isko-sarkisov-complete} that
$$
\begin{array}{rl}
p^* (-K_X) &= q^* (-K_Y) + df - 2E'\\
f&=f'\\
E&=df'-E' = p^* (-K_X) - q^* (-K_Y) + E'
  
  \end{array}
  $$
    
Let $V$ be the Quillen bundle on $C$. Since the isomorphism class of $C$ is preserved under
this link, so is the algebra $A$, and therefore the category $\Db(k,A)$.

We are left to prove that the equivalence class of the category $\ka_S:=\Db(C,\kb)$ is also preserved.
Over the algebraic closure, the category $\ka_{\overline{S}} \subset \Db(\overline{S})$ admits the following semiorthogonal
decomposition:
\begin{equation}
\ka_{\overline{S}}=\sod{\ko_{\overline{S}}(\Sigma),
\ko_{\overline{S}}(\Sigma+F),\ko_{F_1},\ldots,\ko_{F_r}},
\end{equation}
where $F_i$ are given a choice of a line in each singular fibers of the fibration $\overline{S} \to \PP^1_{\overline{k}}$,
and $F$ and $\Sigma$ are respectively the fiber and the section of the conic bundle.
Similarly, we have a semiorthogonal decomposition
\begin{equation}
\ka_{\overline{S'}}=\sod{\ko_{\overline{S'}}(\Sigma'),
\ko_{\overline{S'}}(\Sigma'+F'),\ko_{F'_1},\ldots,\ko_{F'_r}}.
\end{equation}

Over $\overline{k}$, we have that $\overline{S}$ is the blow up of $r$ points on a Hirzebruch
surface $\FF_n$, for some $n$, so that $−K_{\overline{S}} = 2\Sigma + (n + 2)F$.
Similarly, $\overline{S'}$ is the blow-up of $r$ points on a Hirzebruch surface $\FF_m$
and one can see that the map $\overline{\phi}$ is obtained by lifting to $\overline{S}$ the composition
of $d$ elementary transformations
on $\FF_n$ along fibers that do not contain the points blown-up by $\overline{S} \to \FF_n$. In 
particular, $m=n-d$ and
$−K_{\overline{S'}} = 2 \Sigma' + (n - d + 2)F' - E'$. It follows that in
our transformation: $\Sigma = \Sigma' - E'$.

Then we have the following equivalence of subcategories of $\Db(\overline{Z})$:
$$p^*\ka_{\overline{S}} \otimes \ko(E') = q^*\ka_{\overline{S'}}.$$
Indeed, first note the singular fibers are preserved under the birational transformation $\phi$, we can choose
$F_i'$ such that $p^*F_i = q^*F_i'$ for $i=1,\ldots,r$. Moreover $\ko_{q^*F_i}$ does not change under tensor
with $\ko(E')$, since the exceptional divisor is not supported on singular fibers.
Secondly, using the above relation $\Sigma = \Sigma' - E'$, it is not difficult to see that
$$\sod{\ko(\Sigma),\ko(\Sigma+F)}\otimes \ko(E') = \sod{\ko(\Sigma'),\ko(\Sigma'+F')}$$
We can now conclude since the autoequivalence $\otimes(E')$ descends to an autoequivalence of
$\Db(Z)$, since $E'$ is defined over $k$. It follows, that $\ka_S$ is equivalent to $\ka_{S'}$
and the proof is complete.
\end{proof}

\section{Links of type IV}
A link of type IV is a birational self-transformation of a minimal surface $S$ exchanging two conic bundle
structures $\pi_i : S \to C_i$, for $i=1,2$. We will than denote by $\ka_i:=\Phi_i\Db(C_i,\kb_i) \subset \Db(S)$ to
keep simple notations. In particular, the birational map $\phi: S \dashrightarrow S$ fits a commutative diagram
 $$\xymatrix{S \ar[d]_{\pi_1}\ar@{-->}[rr]^\phi & & S \ar[d]^{\pi_2}\\
 C_1 \ar[rd]& & C_2 \ar[ld] \\
 & \Spec(k) & }$$
As proved in \cite{isko-sarkisov-complete},
$S$ must have degree 8,4,2 or 1, and the list of birational maps is quite limited.
In this case, we need to prove that the Griffiths-Kuznetsov component is well defined, namely that
it does not depend on the choice of semiorthogonal decomposition given by the different conic
bundle structures. We proceed by a case by case analysis.

\begin{prop}
Let $S$ be a degree $8$ surface and $\phi$ a link of type IV. Then $\GK_S$ is well defined.
\end{prop}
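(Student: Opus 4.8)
The plan is to first determine the geometry of $S$ and then read $\GK_S$ off from its symmetry. A link of type IV equips $S$ with two conic bundle structures $\pi_1\colon S\to C_1$ and $\pi_2\colon S\to C_2$ defined over $k$. Since $S$ has degree $8$, its base change $\overline{S}$ is a Hirzebruch surface (no singular fibres, the degree being maximal), and the presence of two distinct rulings forces $\overline{S}\cong\PP^1_{\overline k}\times\PP^1_{\overline k}$. Each $\pi_i$ being defined over $k$, the two ruling classes in $\Pic(\overline S)$ are individually $\Gal(\overline k/k)$-stable, so the morphism $(\pi_1,\pi_2)\colon S\to C_1\times C_2$ is defined over $k$ and becomes an isomorphism after $-\times_k\overline k$; being an isomorphism geometrically, it is an isomorphism over $k$. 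Thus $S\cong C_1\times C_2$ with $C_i=\SB(A_i)$, and the type IV link is exactly the interchange of the two projections. Because $\PP^1\times\PP^1$ has precisely two rulings, the unordered pair $\{C_1,C_2\}$, and therefore $\{A_1,A_2\}$, is intrinsic to $S$.

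Next I would exhibit both decompositions \eqref{eq:the-main-deco} inside a common Künneth decomposition. Writing $\Db(C_i)=\sod{\ko_{C_i},V_i}$ with $V_i$ an $A_i$-exceptional object, external tensor product yields
$$\Db(S)=\sod{\ko\boxtimes\ko,\ V_1\boxtimes\ko,\ \ko\boxtimes V_2,\ V_1\boxtimes V_2},$$
whose four blocks are exceptional with endomorphism algebras $k$, $A_1$, $A_2$ and $A_1\otimes A_2$; this is checked after base change to a splitting field, where each block becomes a line bundle on $\PP^1\times\PP^1$ and one recovers \eqref{eq:on-the-algclos} with $r=0$. For the structure $\pi_1$ one has $\sod{\ko_S,V}=\pi_1^*\Db(C_1)=\sod{\ko\boxtimes\ko,V_1\boxtimes\ko}$, so its complement $\Phi_1\Db(C_1,\kb_1)$ is generated by $\ko\boxtimes V_2$ and $V_1\boxtimes V_2$, giving $\Db(C_1,\kb_1)\simeq\sod{\Db(k,A_2),\Db(k,A_1\otimes A_2)}$; symmetrically $\Db(C_2,\kb_2)\simeq\sod{\Db(k,A_1),\Db(k,A_1\otimes A_2)}$. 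In both cases the blocks that are not representable in dimension $0$ (by Lemma~\ref{lem:dim0dim1}, those whose algebra is not Brauer-trivial) form the same collection, namely the non-trivial members of $\{\Db(k,A_1),\Db(k,A_2),\Db(k,A_1\otimes A_2)\}$.

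Finally, Definition~\ref{def:GKS}(ii) prescribes $\GK_S$ to be the direct sum of exactly these non-trivial members, and this expression is manifestly invariant under the interchange $C_1\leftrightarrow C_2$ (which fixes the class of $A_1\otimes A_2$, Brauer classes of quaternion algebras commuting). Since this interchange is the effect of the type IV link, and since the two decompositions attached to $\pi_1$ and $\pi_2$ were just shown to produce the same set of non-dimension-$0$ components, $\GK_S$ does not depend on the chosen conic bundle structure and is well defined. The only genuinely geometric input is the identification $S\cong C_1\times C_2$ with intrinsic factors; I expect the step requiring the most care to be matching the even Clifford algebra description of $\Phi_i\Db(C_i,\kb_i)$ in \eqref{eq:the-main-deco} with the two Künneth blocks of the product, which is cleanest to verify after base change to a splitting field using \eqref{eq:on-the-algclos}. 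Once that identification is secured, the invariance is a formal consequence of the symmetry of Definition~\ref{def:GKS}(ii).
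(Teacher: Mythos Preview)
Your proposal is correct and essentially unpacks what the paper delegates to a citation: the paper's proof consists of the single observation that $S=C_1\times C_2$ together with a reference to \cite[\S C]{auel-berna-surf}, where the K\"unneth-type decomposition of $\Db(C_1\times C_2)$ into four exceptional blocks with endomorphism algebras $k$, $A_1$, $A_2$, $A_1\otimes A_2$ is worked out and the symmetry of Definition~\ref{def:GKS}(ii) is observed. Your argument reproduces exactly this content, with the additional (correct) justification of the product structure via the two rulings of $\FF_0$, so the approaches coincide.
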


\begin{proof}
In this case, $S = C_1 \times C_2$. The fact that $\GK_S$ is well-defined is proved in \cite[\S C]{auel-berna-surf}
\end{proof}

\begin{prop}
Let $S$ be a surface of degree $4$, $2$, or $1$, and $\phi$ a link of type IV. Then $\GK_S$ is well defined.
\end{prop}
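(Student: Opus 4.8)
The plan is to reduce the statement to a comparison of the two even Clifford components and then run the same algebraic-closure-plus-descent strategy as in Theorem~\ref{thm:linkII}. First I would dispose of the rational case: if $S$ is rational then $\GK_S=0$ by Definition~\ref{def:GKS}(i) and there is nothing to prove, so assume $S$ is not rational. For the degrees $4,2,1$, the surfaces carrying a link of type~IV have both bases $C_i\simeq\PP^1$ by Iskovskikh's classification \cite{isko-sarkisov-complete}; in particular the associated algebras $A_i$ are trivial, so each structure $\pi_i\colon S\to C_i\simeq\PP^1$ falls in case (iv) of Definition~\ref{def:GKS} (or, should $S$ be birational to a non-rational quadric, case (iii), handled in the same way with the even Clifford algebras $\kc_i$ in place of the full components). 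Writing $F_i$ for the fiber class of $\pi_i$ and $V_i=\pi_i^*\ko_{\PP^1}(1)=\ko_S(F_i)$, decomposition \eqref{eq:the-main-deco} gives $\GK_S=\Phi_i\Db(\PP^1,\kb_i)=\ka_i$ with $\ka_i=\sod{\ko_S,\ko_S(F_i)}^{\perp}$. The whole problem is therefore to produce a $k$-linear equivalence $\ka_1\simeq\ka_2$.

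To build such an equivalence I would mirror the proof of Theorem~\ref{thm:linkII} and work first over $\overline{k}$. There both $\ka_{\overline{S},i}=\sod{\ko_{\overline{S}},\ko_{\overline{S}}(F_i)}^{\perp}$ sit inside the common category $\ko_{\overline{S}}^{\perp}$ and admit the explicit decompositions \eqref{eq:on-the-algclos}, namely $\ka_{\overline{S},i}=\sod{\ko_{\overline{S}}(\Sigma_i),\ko_{\overline{S}}(\Sigma_i+F_i),\ko_{G_1^{(i)}},\dots,\ko_{G_r^{(i)}}}$, where the $G_j^{(i)}$ are chosen lines in the singular fibers of $\pi_i$ and $\Sigma_i$ is a section. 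The crucial geometric input is that both rulings draw their singular fibers from the same $\Gal(\overline{k}/k)$-stable set of $(-1)$-curves on $\overline{S}$, so the point-objects $\ko_{G_j^{(i)}}$ are carried to one another compatibly. Using Iskovskikh's explicit list of links of type~IV in degrees $4,2,1$ \cite{isko-sarkisov-complete} I would read off, inside $\Pic(\overline{S})$, the relation between the two pairs $(F_1,\Sigma_1)$ and $(F_2,\Sigma_2)$, and then exhibit an autoequivalence of $\Db(\overline{S})$---a twist $\otimes\,\ko_{\overline{S}}(D)$ by a suitable divisor class $D$ followed by a finite sequence of mutations---carrying $\sod{\ko_{\overline{S}},\ko_{\overline{S}}(F_1)}$ to $\sod{\ko_{\overline{S}},\ko_{\overline{S}}(F_2)}$, and hence $\ka_{\overline{S},1}$ to $\ka_{\overline{S},2}$.

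The final step is descent, exactly as in Theorem~\ref{thm:linkII}: the classes $F_i$ and $\Sigma_i$ are fiber and section classes of conic bundle structures defined over $k$, so the divisor $D$ and the admissible subcategories used in the mutations are Galois-stable, and the composite autoequivalence descends to a $k$-linear equivalence $\ka_1\simeq\ka_2$. In those cases where the type~IV self-map $\phi$ is biregular, with $\phi^*F_2=F_1$, this is immediate: $\phi^*$ is then a $k$-linear autoequivalence fixing $\ko_S$ and sending $\ko_S(F_2)$ to $\ko_S(F_1)$, whence $\phi^*\ka_2=\ka_1$.

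The main obstacle is the $k$-rationality of the comparison. Over $\overline{k}$ the equivalence $\ka_{\overline{S},1}\simeq\ka_{\overline{S},2}$ is essentially formal, since both categories are generated by the same number of exceptional objects; the content lies entirely in checking that the change of ruling, together with the induced identification of the two discriminant configurations as $\Gal(\overline{k}/k)$-sets carrying their Brauer data, is realized by a functor defined over $k$. This is precisely what forces the case-by-case analysis along Iskovskikh's finite list for the degrees $4,2,1$, and---in the quadric-birational case (iii)---the separate verification that the even Clifford algebras $\kc_1$ and $\kc_2$ are Morita equivalent.
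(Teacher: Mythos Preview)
Your plan diverges from the paper's in a substantive way. The paper never passes to $\overline{k}$ or invokes the section classes $\Sigma_i$; instead it works entirely over $k$, using only the numerical relation $F_1+F_2=-aK_S$ from \cite{isko-sarkisov-complete} (with $a=1,2,4$ in degrees $4,2,1$) together with twists by powers of $\omega_S$ and explicit mutations. All objects involved---$\omega_S$, the $V_i$, and the $\ka_i$---are $k$-rational from the outset, so no descent step is needed. In degree~$1$ the argument is genuinely more delicate than a twist-plus-mutation: a Riemann--Roch computation is required to identify a particular mutation as a rank-$2$ extension $\kf$, and Gorodentsev's characterization of exceptional bundles by rank and $c_1$ is invoked to match it with the analogous bundle arising from the second ruling.

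Your descent step has a real gap. You assert that ``the classes $F_i$ and $\Sigma_i$ are \ldots\ defined over $k$'', but a conic bundle over $k$ need not admit a $k$-rational section, and over $\overline{k}$ the class $\Sigma_i$ depends on a non-canonical choice of which component to contract in each singular fibre---a choice the Galois group will in general permute. Hence an autoequivalence built from a divisor $D$ involving $\Sigma_i$ will not descend. (This is precisely why the argument of Theorem~\ref{thm:linkII} succeeds: there the twisting divisor is the exceptional locus $E'$, which \emph{is} defined over $k$; here you have no analogous $k$-rational divisor unless you rediscover that multiples of $K_S$ suffice, which is essentially the paper's proof.) Your fallback biregular argument also needs more: a type~IV link compares two fibrations on the \emph{same} surface, so appealing to $\phi^*$ amounts to exhibiting a $k$-automorphism of $S$ exchanging the two rulings, an extra hypothesis you have not established. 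Finally, the claim that $C_i\simeq\PP^1$ in all three degrees is asserted by the paper only for degree~$1$; in degrees $4$ and $2$ it retains the Quillen bundles $V_i$ and handles possibly non-trivial $A_i$ via $A_i^{op}\sim A_i$.
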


\begin{proof}
For $i=1,2$, consider the semiorthogonal decomposition
$$\Db(S)=\sod{\ko_S,V_i,\ka_i},$$
and recall that $\sod{\overline{V_i}}=\sod{\ko_{\overline{S}}(F_i)}$, for $F_i$ a geometric fiber of $\pi_i$,
since either $C_i$ is rational
and $V_i$ is such a line bundle, or $C_i$ is not rational and $\overline{V_i}=\ko_{\overline{S}}(F_i)^{\oplus 2}$.
Now we proceed by case by case analysis following the possibilities given by \cite{isko-sarkisov-complete}.

\medskip

{\bf Degree 4.} Assume $S$ has degree 4.
We have
$F_1 = -K_S - F_2$, so that $V_1 = V_2^* \otimes \omega^*$. It follows that $\sod{V_1} \simeq \sod{V_2^*} \simeq \sod{V_2}$,
since $A_2^{op}$ and $A_2$ are Brauer equivalent.

Now consider
$$\Db(S)=\sod{V_2^*,\ko_S,\ka_2}=\sod{V_1,\omega_S^*,\ka_2\otimes \omega_S^*}=\sod{\ko_S,\ka_2,V_1},$$
where the first equality is given by the autoequivalence $\otimes \omega_S$
on $\Db(S)$ and the second is the mutation of $\sod{\omega_S^*,\ka_2'}$ to the left with respect to $V_1$. We
can then mutate $\ka_2$ to the right with respect to $V_1$ and obtain then $\ka_1$, so that we have shown that
$\ka_1 \simeq \ka_2$ and we finished the proof.

\medskip

{\bf Degree 2.} Assume $S$ has degree 2.
We have
$F_1 = -2K_S - F_2$, so that $V_1 = V_2^* \otimes (\omega^*)^{\otimes 2}$.
It follows that $\sod{V_1} \simeq \sod{V_2^*} \simeq \sod{V_2}$,
since $A_2^{op}$ and $A_2$ are Brauer equivalent.

Now consider the first conic bundle structure and the semiorthogonal decompositions
$$
\Db(S)= \sod{\ko_S,V_1,\ka_1}=\sod{V_1,\ka_1,\omega_S^*}=\sod{\ka_1',V_1,\omega^*_S}, 
$$
where the first equality is the mutation of $\ko_S$ to the right with respect to its orthogonal
complement and the second one is the mutation of $\ka_1$ to the left with respect to $V_1$,
so that $\ka_1'=^\perp\sod{V_1,\omega_S^*}$ is equivalent to $\ka_1$.

Now consider the second conic bundle structure and the semiorthogonal decompositions
$$
\begin{array}{c}
\Db(S)= \sod{V_2^*,\ko_S,\ka_2}=\sod{V_1,(\omega_S^*)^{\otimes 2},\ka_2 \otimes (\omega_S^*)^{\otimes 2}} =\\
=\sod{\omega_S^*,\ka_2 \otimes (\omega_S^*),V_1} = \sod{\ka_2',\omega_S^*,V_1},
\end{array}
$$
where we first tensor with $(\omega^*_S)^{\otimes 2}$, then mutate $\sod{(\omega_S^*)^{\otimes 2},\ka_2
\otimes (\omega_S^*)^{\otimes 2}}$ to the left with respect to its orthogonal complement, then
mutate $\ka_2 \otimes \omega_S^*$ to the left with respect to $\omega^*_S$. It follows in particular that
$\ka_2'=^\perp\sod{\omega_S^*,V_1}$ is equivalent to $\ka_2$.

Finally, the two semiorthogonal decompositions give the full orthogonality between $V_1$ and $\omega_S^*$,
so that $\sod{\omega_S^*,V_1}=\sod{V_1,\omega_S^*}$. This implies that $\cat{A}_1'=\cat{A}'_2$ and the proof follows.

\medskip

{\bf Degree 1.} Assume $S$ has degree 1.
We have then that $C_i$ are rational and
$F_1 = -4K_S - F_2$. In particular, we only need to prove that the categories $\ka_1$ and $\ka_2$
are equivalent.

Let us consider the first conic bundle structure and the semiorthogonal decompositions:

$$
\begin{array}{c}
\Db(S)=\sod{\ko(-F_1),\ko_S,\ka_1}=\sod{\ko(F_2),(\omega_S^*)^{\otimes 4}, \ka_1 \otimes  (\omega_S^*)^{\otimes 4}}=\\
=\sod{(\omega_S^*)^{\otimes 3},\ka_1\otimes (\omega_S^*)^{\otimes 3},\ko(F_2)}
=\sod{(\omega_S^*)^{\otimes 3},\ko(F_2),\ka_1'},
\end{array}
$$

where we first tensor by $(\omega_S^*)^{\otimes 4}$, then mutate $\sod{(\omega_S^*)^{\otimes 4}, 
\ka_1 \otimes (\omega_S^*)^{\otimes 4}}$ to the left with respect to its orthogonal complement, then mutate $\ka_1
\otimes (\omega_S^*)^{\otimes 3}$ to the right with respect to $\ko(F_2)$.

Now we need to mutate $\ko(F_2)$ to the left with respect to $(\omega_S^*)^{\otimes 3}$. To this end, let us calculate:

$$\Hom^i((\omega_S^*)^{\otimes 3},\ko(F_2)=H^i(S,\ko(3K_S+F_2)).$$

First of all, note that $(3K_S+F_2).F_2 <0$, which implies that $H^0(S,\ko(3K_S+F_2))=0$.
Similarly, by Serre duality we have that $H^2(S,\ko(3K_S+F_2))=H^0(S,\ko(2K_S+F_1))=0$ since
$(2K_S+F_1).F_1<0$.

Finally we are left with $\dim H^1(S,\ko(3K_S+F_2))=-\chi(\ko_S,\ko(3K_S+F_2))$. The latter can be calculated
by Riemann-Roch:
$$\chi(\ko_S,\ko(3K_S+F_2))=\frac{1}{2}(3K_S+F_2)\cdot(2K_S+F_2)+1=-1,$$
since $K_S \cdot F_2=-2$ and $S$ has degree 1. It follows, that there is a unique extension
$$0 \longrightarrow \ko(F_2) \longrightarrow \kf \longrightarrow (\omega_S^*)^{\otimes 3} \longrightarrow 0,$$
which has rank 2 and first Chern class $F_2-3K_S$. Moreover, $\kf$ is the result of the mutation of
$\ko(F_2)$ to the left with respect to $(\omega_S^*)^{\otimes 3}$, so that we end up with the decomposition

\begin{equation}\label{eq:withkf}
\Db(S) = \sod{\kf,(\omega_S^*)^{\otimes 3},\ka_1'}.
\end{equation}

Now consider the second conic bundle structure and the semiorthogonal decompositions:

$$\Db(S)=\sod{\ko_S,\ko(F_2),\ka_2}=\sod{\ko(F_2),\ka_2,\omega_S^*}=\sod{\ko(F_2),\omega_S^*,\ka_2'},$$
where the first equality is the mutation of $\ko_S$ to the right with respect to its right orthogonal,
and $\ka_2'$ is the mutation of $\ka_2$ to the left with respect to $\omega_S^*$ and is therefore
equivalent to $\ka_2$.

We mutate now $\ko(F_2)$ to the right with respect to $\omega_S^*$. A calculation similar to the above one
shows that there is exactly one nontrivial extension
$$0 \longrightarrow \omega_S^* \longrightarrow \kg \longrightarrow \ko(F_2) \longrightarrow 0,$$
which has rank 2 and first Chern class $F_2-K_S$. Moreover, $\kg$ is the result of the mutation of
$\ko(F_2)$ to the right with respect to $\omega_S^*$, and is an exceptional object. Thanks to Gorodentsev \cite{gorodentsev-moving},
exceptional bundles on $S$ are characterized by their rank and their first Chern class. Note that the
$\kf$ and $\kg$ have both rank 2, while the first Chern class of $\kg$ is the first Chern class of
$\kf \otimes \omega_S$. It follows that $\kg \simeq \kf \otimes \omega_S$ is the mutation of
$\ko(F_2)$ to the right with respect to $\omega_S^*$. We hence end up with the decompositions
$$
\begin{array}{c}
\Db(S)=\sod{\omega_S^*,\kf\otimes\omega_S,\ka_2'}=\sod{(\omega_S^*)^{\otimes 2},\kf,\ka_2'\otimes\omega_S^*}=\\
=\sod{\kf,\ka_2'\otimes\omega_S^*,(\omega_S^*)^{\otimes 3}}=\sod{\kf,(\omega_S^*)^{\otimes 3},\ka_2''},
\end{array}
$$
where first we tensor by $\omega_S^*$, then mutate $(\omega_S^*)^{\otimes 2}$ to the right with respect to its right
orthogonal, and $\ka_2''$ is the left mutation of $\ka_2' \otimes (\omega_S^*)^{\otimes 2}$ to the right with respect
to $(\omega_S^*)^{\otimes 3}$ and is therefore equivalent to $\ka_2$. The proof follows then by
comparison with \ref{eq:withkf}.
\end{proof}

\begin{corollary}\label{cor:typeIV}
The Griffiths-Kuznetsov component is well-defined for minimal conic bundles and hence birational
invariant under links of type IV.
\end{corollary}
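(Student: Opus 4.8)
The plan is to obtain Corollary~\ref{cor:typeIV} as the formal assembly of the two preceding propositions, once one recognizes that well-definedness and invariance under type IV links are the same assertion. Indeed, $\GK_S$ was defined in Definition~\ref{def:GKS} relative to a chosen conic bundle structure $\pi:S\to C$, so to say that $\GK_S$ is \emph{well-defined} is to say that its equivalence class is independent of that choice. Two minimal conic bundle structures on a fixed surface $S$ differ exactly by a composition of links of type IV, so it suffices to fix a single such link $\phi:S\dashrightarrow S$ exchanging $\pi_1:S\to C_1$ and $\pi_2:S\to C_2$ and to prove that the summands of $\GK_S$ read off from $\pi_1$ are equivalent to those read off from $\pi_2$. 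First I would invoke Iskovskikh's classification \cite{isko-sarkisov-complete}, which restricts the surfaces carrying a link of type IV to degrees $8$, $4$, $2$, and $1$; these are therefore the only cases to examine.

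For the Clifford components this is already done: in degree $8$ the surface is a product $S=C_1\times C_2$ and the first proposition settles the claim via \cite[\S C]{auel-berna-surf}, while in degrees $4$, $2$, and $1$ the second proposition produces, through the displayed chains of twists and mutations, an equivalence $\ka_1\simeq\ka_2$ between $\ka_i=\Phi_i\Db(C_i,\kb_i)$. It then remains to match the curve parts $\Db(k,A_i)$ that also contribute to $\GK_S$ when $C_i$ is non-rational. Here I would read off from the same proofs the relation $V_1\simeq V_2^*\otimes(\omega_S^*)^{\otimes e}$ (with $e=1,2$ in degrees $4,2$, while $C_i\simeq\PP^1$ in degree $1$, so that no curve part occurs). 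This relation gives $\End(V_1)=A_2^{op}$; since $C_i$ is a geometrically rational curve its algebra $A_i$ is a quaternion algebra, hence of order dividing $2$ in the Brauer group, so $A_1=A_2^{op}\simeq A_2$ and $\sod{V_1}\simeq\sod{V_2}$, i.e. $\Db(k,A_1)\simeq\Db(k,A_2)$.

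Putting the pieces together, every summand of $\GK_S$ computed from $\pi_1$ is equivalent to the corresponding summand computed from $\pi_2$, so the two descriptions coincide; this is precisely the well-definedness of $\GK_S$ and, equivalently, its invariance under links of type IV. The genuine content sits in the mutation computations already carried out in the propositions, so the corollary itself is essentially bookkeeping. The one point I expect to require care is completeness: one must be certain that Iskovskikh's list of type IV links is exhausted by the four degrees above, so that no further non-rational conic bundle structure on $S$ escapes the case analysis.
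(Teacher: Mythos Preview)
Your proposal is correct and matches the paper's approach: the corollary is stated in the paper without proof precisely because it is the formal consequence of the two propositions together with Iskovskikh's classification restricting type~IV links to degrees $8,4,2,1$. Your additional discussion of the curve summands $\Db(k,A_i)$ is sound but redundant, since the propositions already assert that the full $\GK_S$ (not merely the Clifford part $\ka_i$) is well-defined, and the equivalence $\sod{V_1}\simeq\sod{V_2}$ is established inside the proof of the second proposition.
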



\begin{thebibliography}{10}

\bibitem{auel-berna-frg}
A. Auel, and M. Bernardara,
\emph{Cycles, derived categories, and rationality},  in \emph{Surveys on Recent Developments in Algebraic Geometry}, Proceedings of Symposia in Pure Mathematics 95, 199-266 (2017).

\bibitem{auel-berna-surf}
A.~Auel and M.~Bernardara,
\emph{Semiorthogonal decompositions and birational geometry of del Pezzo surfaces over arbitrary fields}, Proc. London Math. Soc. (3) 117 (2018) 1-64.    

\bibitem{auel-berna-bolo}
A.~Auel, M.~Bernardara, and M.~Bolognesi, 
\emph{Fibrations in complete intersections of quadrics, Clifford
algebras, derived categories, and rationality problems}, 
Journal de Math. Pures et Appliquées 102, 249-291 (2014)


\bibitem{bernardara:brauer_severi}
M.~Bernardara, {\it A semiorthogonal decomposition for Brauer-Severi schemes}. Math. Nachr. {\bf 282} (2009), no. 10.


\bibitem{bolo_berna:conic}
M.~Bernardara and M.~Bolognesi, \emph{Derived categories and
  rationality of conic bundles}, Compositio Math. {\bf 149} (2013), no.~11, 1789--1817.

\bibitem{bolognesi_bernardara:representability}
M.~Bernardara and M.~Bolognesi, \emph{Categorical representability
and intermediate Jacobians of Fano threefolds}. EMS Ser. Congr. Rep.,
Eur. Math. Soc., 2013.

\bibitem{clemens_griffiths}
C.~Clemens and P.~Griffiths, \emph{The intermediate {J}acobian of the cubic
  threefold}. Ann. Math. \textbf{95} (1972), 281--356.

\bibitem{gorodentsev-moving}
A.L.~Gorodentsev, {\em Exceptional bundles on surfaces with a moving anticanonical class},
Izv. Akad. Nauk SSSR Ser. Mat. {\bf 52} (1988), no. 4, 740--757, 895; translation in
Math. USSR-Izv. {\bf 33} (1989), no. 1, 67--83. 

 \bibitem{hassett-ratsurf}
 B.~Hassett, {\em Rational surfaces over nonclosed fields}, in {\em Arithmetic geometry}
 155--209, Clay Math. Proc., 8, Amer. Math. Soc., Providence, RI, 2009.

 \bibitem{isko-sarkisov-complete}
 V.A.~Iskovskikh, {\em Factorization of birational mappings of rational surfaces from the point of view of Mori theory} (Russian)
 Uspekhi Mat. Nauk 51 (1996), no. 4(310), 3--72; translation in Russian Math. Surveys 51 (1996), no. 4, 585--652.
 
\bibitem{kuznetquadrics}
A.~Kuznetsov, \emph{Derived categories of quadric fibrations and intersections of
  quadrics}, Adv.\ Math.\ \textbf{218} (2008), no.\ 5, 1340--1369.  

  \bibitem{kuznetsov:v14}
A.~Kuznetsov, \emph{Derived category of cubic and ${V}_{14}$ threefold}, Proc.
  V.A. Steklov Inst. Math. \textbf{246} (2004), 183--207.
  
\bibitem{lamy-zimmermann}
S. Lamy and S. Zimmermann, \emph{Signature morphisms from the Cremona group over a non-closed field}, to appear in Journal of the European Mathematical Society.

 \bibitem{quillen:higher_K-theory}
 D.~Quillen,
 \emph{Higher algebraic $K$-theory. I},
 Algebraic $K$-theory I: Higher $K$-theories
 (Proc.\ Conf., Battelle Memorial Inst., Seattle, Wash., 1972), Lecture Notes in Math., vol.\ 341, Springer, Berlin,
 1973, pp. 85--147. 


\end{thebibliography}
\end{document}